 \newtheorem{theorem}{Theorem}[section]
\newtheorem{proposition}[theorem]{Proposition}
\newtheorem*{WET}{W\l odarczyk's Embedding Theorem}
\theoremstyle{definition}
\newtheorem{remark}[theorem]{Remark}
\def\M{\ensuremath{\mathbb{M}}}
\def\Z{\ensuremath{\mathbb{Z}}}
\def\Q{\ensuremath{\mathbb{Q}}}
\def\RR{\ensuremath{\mathbb{R}}}
\def\R{\ensuremath{\mathbf{R}}}
\def\cU{\ensuremath{\mathcal{U}}}
\def\an{\mathrm{an}}
\def\<{\ensuremath{\langle}}
\def\>{\ensuremath{\rangle}}
\def\bx{\mathbf{x}}
\def\cS{\mathcal{S}}
\DeclareMathOperator{\Div}{Div}
\DeclareMathOperator{\Hom}{Hom}
\DeclareMathOperator{\Spec}{Spec}
\DeclareMathOperator{\trop}{Trop}
\DeclareMathOperator{\Trop}{\bf Trop}
\DeclareMathOperator{\val}{val}
\begin{document}

\title{Limits of tropicalizations}

\author{Tyler Foster}
\author{Philipp Gross}
\author{Sam Payne}

\address{Yale University
Mathematics Department\\
10 Hillhouse Ave\\
New Haven, CT 06511 \\ U.S.A.}
\email{tyler.foster@yale.edu}

\address{
Fakult\"at f\"ur Mathematik \\
Universit\"atsstr. 1 \\
40225 DŸsseldorf \\
Germany}

\email{gross@math.uni-duesseldorf.de}

\address{Yale University
Mathematics Department\\
10 Hillhouse Ave\\
New Haven, CT 06511 \\ U.S.A.}
\email{sam.payne@yale.edu}

\begin{abstract}
We give general criteria under which the limit of a system of tropicalizations of a scheme over a nonarchimedean field is homeomorphic to the analytification of the scheme.  As an application, we show that the analytification of an arbitrary closed subscheme of a toric variety is naturally homeomorphic to the limit of its tropicalizations, generalizing an earlier result of the third author for quasiprojective varieties.
\end{abstract}

\maketitle

\section{Introduction}

Let $K$ be a field that is complete with respect to a nonarchimedean valuation, and let $X$ be a scheme of finite type over $K$.  The analytification $X^\an$, in the sense of Berkovich \cite{Berkovich90}, is a locally compact, locally contractible Hausdorff space that admits a natural proper, continuous projection onto the tropicalization of the image of any closed embedding of $X$ into a toric variety.  These projections are compatible with maps between tropicalizations induced by toric morphisms.  Therefore, for any inverse system $\cS$ of toric embeddings of $X$ there is a natural map to the corresponding limit of tropicalizations
\[
\pi_\cS : X^\an \rightarrow \varprojlim_{\iota \in \cS} \trop(X, \iota).
\]
Our first main technical result is a sufficient condition on the system $\cS$ for $\pi_\cS$ to be a homeomorphism.  

\begin{theorem} \label{thm:sufficient}
Let $\cS$ be a system of toric embeddings that contains finite products.  Suppose there is an affine open cover $X = U_1 \cup \cdots \cup U_r$ with the following property, for $1 \leq i \leq r$:
\begin{itemize}
\item For any nonzero regular function $f \in K[U_i]$, there is an embedding $\iota \in \cS$ such that $U_i$ is the preimage of a torus invariant open set and $f$ is the pullback of a monomial.
\end{itemize}
Then $\pi_\cS$ is a homeomorphism.
\end{theorem}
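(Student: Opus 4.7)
The plan is to work locally with respect to the cover $U_1,\dots,U_r$ and build, on each piece, an explicit continuous inverse to $\pi_\cS$ by reconstructing a multiplicative seminorm on $K[U_i]$ directly from the tropical data in the limit.

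\emph{Reduction to affine pieces.} For each $i$, fix $\iota_i\in\cS$ in which $U_i=\iota_i^{-1}(W_i)$ for a torus-invariant open $W_i$ in the target of $\iota_i$. Let $L_i\subset\varprojlim_{\iota\in\cS}\trop(X,\iota)$ be the preimage of $\trop(X,\iota_i)\cap\trop(W_i,\iota_i)$ under the projection to the $\iota_i$-factor. Compatibility of tropicalization with toric open immersions gives $\pi_\cS^{-1}(L_i)=U_i^\an$; the $U_i^\an$ cover $X^\an$; and the $L_i$ cover the limit, as one sees by making the same analysis in the product embedding $\iota_1\times\cdots\times\iota_r\in\cS$ (furnished by the finite-products hypothesis), in which all the $U_i$ simultaneously arise as preimages of torus-invariant opens whose union exhausts the image of $X$. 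It therefore suffices to prove that each $\pi_\cS|_{U_i^\an}:U_i^\an\to L_i$ is a homeomorphism.

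\emph{Injectivity and definition of the inverse.} A point $x\in U_i^\an$ corresponds to a multiplicative seminorm $|\cdot|_x$ on $K[U_i]$, and the hypothesis furnishes, for each nonzero $f\in K[U_i]$, some $\iota\in\cS$ and a monomial $m$ on the target with $\iota^*m=f$; the tropical coordinate of $\pi_\iota(x)$ at $m$ equals $-\log|f|_x$, so $\pi_\cS(x)$ determines $|f|_x$ and hence $x$. For the inverse, given $y\in L_i$ set $|0|_y=0$ and $|f|_y:=\exp\bigl(-v_m(\pi_\iota(y))\bigr)$ for any such $(\iota,m)$. The product closure of $\cS$ yields well-definedness, since two choices are compared in their product $\iota_1\times\iota_2\in\cS$. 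Multiplicativity $|fg|_y=|f|_y\cdot|g|_y$ and compatibility with the valuation on $K$ are verified by choosing, via products, a single $\iota$ realizing $f$, $g$, $fg$, and the relevant constants as pullback monomials: the monomial realizing $fg$ is literally the product of those realizing $f$ and $g$, so the tropical coordinates add.

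\emph{Ultrametric inequality.} This is the crux. Given $f,g\in K[U_i]$, the finite-products hypothesis yields $\iota\in\cS$ in which $f$, $g$, and $f+g$ are simultaneously pullbacks of coordinate monomials $m_f,m_g,m_{f+g}$. The relation $\iota^*(m_{f+g}-m_f-m_g)=(f+g)-f-g=0$ shows that $m_{f+g}-m_f-m_g$ lies in the defining ideal of $\iota(X)$. Because $\pi_\iota(y)\in\trop(X,\iota)$, the Fundamental Theorem of tropical geometry forces the associated tropical polynomial $\min(v_{m_{f+g}},v_{m_f},v_{m_g})$ to attain its minimum at least twice at $\pi_\iota(y)$, whence $v_{m_{f+g}}(\pi_\iota(y))\geq\min(v_{m_f}(\pi_\iota(y)),v_{m_g}(\pi_\iota(y)))$ and therefore $|f+g|_y\leq\max(|f|_y,|g|_y)$.

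\emph{Continuity, gluing, and main obstacle.} Continuity of $y\mapsto|\cdot|_y$ is immediate because each evaluation $y\mapsto|f|_y$ factors through a single tropical projection, and the Berkovich topology on $U_i^\an$ is generated by such evaluations. The local inverses on $U_i^\an$ and $U_j^\an$ agree on overlaps (being dictated by the same tropical data), so they glue to a continuous global inverse of $\pi_\cS$, proving it is a homeomorphism. The main obstacle I anticipate is precisely the ultrametric step: one must exploit the abstract finite-products hypothesis to produce a single embedding realizing $f$, $g$, and $f+g$ as coordinate monomials, and then interpret membership in the limit as asserting, in each $\trop(X,\iota)$, the tropical basis conditions coming from the defining ideal, so that the linear trinomial produced by the product embedding delivers the required inequality.
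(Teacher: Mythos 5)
Your construction of a local inverse $\psi_i:L_i\to U_i^\an$ establishes, at best, that $\pi_\cS|_{U_i^\an}$ is injective and is a homeomorphism onto its image: you verify (in effect) $\psi_i\circ\pi_\cS=\mathrm{id}$ on $U_i^\an$, but never the other composite $\pi_\cS\circ\psi_i=\mathrm{id}$ on $L_i$. That identity is exactly surjectivity of $\pi_\cS$, and it does not follow from your recipe. The problem is that $\psi_i(y)$ is pinned down only by the values $|f|_y$ for $f\in K[U_i]$, computed through the special embeddings supplied by $(\star)$; to check $\trop(\psi_i(y),\iota)=y_\iota$ for an \emph{arbitrary} $\iota\in\cS$ you must compare $y_\iota(u)$ with the value your recipe assigns to $\iota^*\bx^u$. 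But $\iota^*\bx^u$ is regular only on $\iota^{-1}(U_\sigma)$, which need not contain or be contained in $U_i$, and the binomial comparing it with the monomial from your recipe agrees with zero only on (part of) $U_i$, not on all of $X$ --- so it need not lie in the defining ideal, and the ``minimum achieved twice'' argument has nothing to act on. The paper proves surjectivity by an entirely different mechanism that your proposal omits: the tropicalization map $Y_\Delta^\an\to N_\R(\Delta)$ is proper, so each fiber $\trop^{-1}(x_\iota)$ is a nonempty compact subset of $X^\an$, and the finite-products hypothesis shows these fibers have the finite intersection property, whence $\bigcap_\iota\trop^{-1}(x_\iota)\neq\emptyset$. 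Some such compactness input (or a genuinely new argument) is indispensable; without it your ``gluing'' paragraph concludes only that $\pi_\cS$ is an open embedding on each chart, not a homeomorphism onto the limit.

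The remaining parts of your argument are essentially sound and close in spirit to the paper's: your separation of points via monomials is the paper's injectivity proof, and your observation that evaluation at $f$ factors through a single tropical projection is the paper's argument that $\pi_\cS^{-1}$ is continuous chart by chart. Your treatment of the ultrametric inequality via the trinomial $m_{f+g}-m_f-m_g$ and the easy direction of the fundamental theorem (every point of $\trop(X,\iota)$, being the image of a seminorm, satisfies the min-twice condition for elements of the ideal) is a workable extra verification that the paper does not need, since the paper never reconstructs a seminorm; if you keep this route you must still check that the relevant component of $y$ lies in the tropicalization of the invariant affine chart on which all three monomials are regular, and that the trinomial vanishes on all of $X$ intersected with that chart, which holds because $(\star)$ forces the preimage of that chart to sit inside $U_i$.
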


\noindent The closed embeddings of a quasiprojective variety into torus invariant open subsets of projective space satisfy $(\star)$.  This is proved using properties of ample line bundles in \cite[Lemma~4.3]{analytification}.  The main result of that paper, which says that $\pi_\cS$ is a homeomorphism when $X$ is a quasiprojective variety and $\cS$ is the system of all of its closed embeddings into quasiprojective toric varieties, then follows easily.   Theorem~\ref{thm:sufficient} is significantly more general because it applies equally to schemes that are reducible, non reduced, and non quasiprojective, and because it allows some flexibility in the choice of the system $\cS$.   For instance, if $X$ is a quasiprojective variety, then one could replace the system of all closed embeddings into quasiprojective toric varieties with a smaller system, such as embeddings into invariant open subsets of products of projective spaces, or a larger system, such as all closed embeddings into toric varieties.

Theorem~\ref{thm:sufficient} makes precise the idea that the analytification of a scheme with enough toric embeddings should be homeomorphic to the limit of its tropicalizations.  Ample line bundles produce enough toric embeddings on quasiprojective varieties, but there are many toric varieties and subschemes of toric varieties that have no nontrivial line bundles at all, for which such methods do not apply.  See \cite[Example~3.2]{Eikelberg92}, \cite[pp. 25--26, 72]{Fulton93}, and the examples studied in \cite{branchedcovers}.  Nevertheless, in Section~\ref{sec:embeddings} we show that any scheme that admits a single closed embedding into a toric variety also admits enough embeddings to satisfy $(\star)$.  

\begin{theorem} \label{thm:subscheme}
Let $X$ be a closed subscheme of a toric variety over $K$.  Then the natural map from $X^\an$ to the limit of the inverse system of tropicalizations of all toric embeddings of $X$ is a homeomorphism.
\end{theorem}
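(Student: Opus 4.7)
The plan is to apply Theorem~\ref{thm:sufficient} to the inverse system $\cS$ of all closed embeddings of $X$ into toric varieties over $K$. Two hypotheses must be verified: closure of $\cS$ under finite products, and the existence of an affine open cover of $X$ satisfying $(\star)$. The first is immediate: given $\iota_j : X \hookrightarrow Y_j$ ($j = 1, 2$) with $Y_j$ toric, the composite $X \xrightarrow{\Delta} X \times X \xrightarrow{\iota_1 \times \iota_2} Y_1 \times Y_2$ is a closed embedding of $X$ into the toric variety $Y_1 \times Y_2$.

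For the cover, let $\iota_0 : X \hookrightarrow Y$ be the given embedding, pick a cover of $Y$ by torus invariant affine opens $V_1, \dotsc, V_r$, and set $U_i := \iota_0^{-1}(V_i)$. Each $U_i$ is affine as a closed subscheme of $V_i$, and together they cover $X$. It remains to verify $(\star)$: given a nonzero $f \in K[U_i]$, I must produce a toric embedding $\iota : X \hookrightarrow Y'$ for which $U_i = \iota^{-1}(V')$ for some torus invariant affine open $V' \subset Y'$, and $f = \iota^\ast \chi$ for some character $\chi$ regular on $V'$. The starting point is to lift $f$ to $\tilde f \in K[V_i]$ via the surjection $K[V_i] \twoheadrightarrow K[U_i]$, writing $V_i = \Spec K[\sigma_i^\vee \cap M]$, where $\sigma_i$ is the cone in the fan $\Sigma$ of $Y$ corresponding to $V_i$.

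The guiding observation is that, locally on $V_i$, the graph embedding $V_i \hookrightarrow V_i \times \A^1$, $v \mapsto (v, \tilde f(v))$, is a closed embedding of affine toric varieties under which the character $y$ of the second factor pulls back to $\tilde f$, hence to $f$ on $U_i$. I aim to promote this local picture to a closed embedding of $X$ into a toric variety $Y'$ containing $V_i \times \A^1$ as a torus invariant affine open. A first candidate for $Y'$ is the toric variety attached to the fan in $N \oplus \Z$ consisting of the cones $\sigma \times \{0\}$ for $\sigma \in \Sigma$ together with $\sigma_i \times \R_{\geq 0}$; this is the open subscheme of $Y \times \A^1$ obtained by removing $(Y \setminus V_i) \times \{0\}$, and the character $(0,1) \in M \oplus \Z$ restricts on the new affine chart to the coordinate $y$.

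The main obstacle is that $\tilde f$ need not extend from $V_i$ to a regular function on $Y$, and may vanish at points of $\overline{U_i} \setminus U_i$, so the naive global graph does not define a morphism $X \to Y'$. I expect to resolve this by further enlarging the target: decomposing $\tilde f = \sum_k a_k \chi^{m_k}$ in $K[V_i]$, one adjoins an auxiliary $\A^1$-factor for each character $\chi^{m_k}$, together with an extra $\G_m$-factor used to clear denominators in the rational extensions of the $\chi^{m_k}$ to $Y$. The enlarged toric variety has cocharacter lattice $N \oplus \Z^{\text{rank}}$, and combining $\iota_0$ with the new graph and scaling data should yield a closed embedding of $X$ whose restriction to $U_i$ realizes $f$ as the pullback of a single character regular on the chosen torus invariant affine open. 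Checking that the resulting morphism is a closed embedding with $\iota^{-1}(V') = U_i$ then reduces to a local computation on each $V_j$ in the cover of $Y$, after which Theorem~\ref{thm:sufficient} supplies the claimed homeomorphism.
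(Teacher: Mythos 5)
Your reduction to Theorem~\ref{thm:sufficient} is the right frame, and your choice of cover (preimages $U_i = \iota_0^{-1}(V_i)$ of the invariant affine charts of $Y$) matches the paper's strategy, as does the observation that closure under finite products is automatic. But the heart of the matter is the step you explicitly leave open: given $\tilde f = \sum_k a_k \chi^{m_k} \in K[V_i]$, produce a closed toric embedding of all of $Y$ (not just of $V_i$) under which $\tilde f$ becomes the pullback of a single monomial. Your graph construction $V_i \hookrightarrow V_i \times \A^1$ only works locally; you correctly identify that $\tilde f$ does not extend to a morphism on $Y$, and your proposed repair (``adjoin an $\A^1$-factor per character and a $\G_m$-factor to clear denominators, which should yield a closed embedding'') is not a proof. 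Note that each $\chi^{m_k}$ is already the pullback of a character of the dense torus of $Y$, so monomializing the individual terms is vacuous; the entire difficulty is making the \emph{sum} a monomial, which requires re-embedding $Y$ so that some invariant affine chart has $\tilde f$ among the generators of its coordinate ring. Clearing denominators globally amounts to twisting by a divisor supported on the toric boundary, and for the complete toric varieties with trivial Picard group cited in the introduction there is no ample (or even nontrivial) line bundle available, so no Veronese- or Segre-type trick will rescue the naive graph.

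The paper closes exactly this gap with W{\l}odarczyk's algorithm: Theorem~\ref{thm:strongEmbedding} shows that for any prescribed affine opens $U_j$ of the normal variety $Y$ and any finite sets $R_j \subset K[U_j]$, one can build an embedding system of functions $(\M, \Div(\M))$ containing the elements of $R_j$ among the group $\M$, yielding a closed toric embedding of $Y$ in which each such function is the pullback of a character regular on the corresponding invariant chart. Proposition~\ref{prop:rationalFunctions} then transfers this from $Y$ to the closed subscheme $X$, using product embeddings to arrange simultaneously that $f$ is a monomial pullback and that $U_i$ is the preimage of an invariant open (by monomializing generators of the ideal of $Y \smallsetminus V_i$ on the other charts) --- a point your sketch also does not address for the single embedding $\iota$ that $(\star)$ requires. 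As written, your argument defers precisely the step that constitutes the substance of the theorem, so it is incomplete; to finish it you would need either to invoke W{\l}odarczyk's construction as the paper does, or to supply a genuinely new construction of such embeddings for non-quasiprojective toric varieties.
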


In order to prove Theorem~\ref{thm:subscheme}, we must show that an arbitrary closed subscheme of a toric variety admits many closed embeddings into toric varieties.  The subject of closed embeddings in toric varieties has been studied intensively by several authors \cite{Wlodarczyk93, Cox95b, Hausen00, Hausen02, BerchtoldHausen02, HausenSchroer04}.  We recall the first fundamental result in this subject.

\begin{WET}
Let $X$ be a normal variety.  Then $X$ admits a closed embedding into a toric variety if and only if any two points of $X$ are contained in an affine open subvariety.
\end{WET}

\noindent The two point condition in W\l odarczyk's Embedding Theorem is necessary because a toric variety is covered by invariant affine opens and the union of any two invariant affine opens in a toric variety is quasiprojective.  Therefore, if $X$ is a closed subscheme of a toric variety then any two points of $X$ are contained in a quasiprojective open subset of $X$, and hence share an affine open neighborhood.  

W\l odarczyk's proof of this theorem gives a flexible algorithm to construct embeddings with desirable properties.  For instance, he shows that if $X$ is smooth then the embedding can be constructed so that the ambient toric variety is also smooth.  However, the theorem does not apply to non normal schemes, and the arguments in the proof do not generalize easily to this case; it remains an open problem to characterize the non normal schemes that admit closed embeddings into toric varieties, even if one allows embeddings into non normal toric varieties.  

Our proof of Theorem~\ref{thm:subscheme} does not apply the theory of toric embeddings to $X$ directly.  Instead, given a closed subscheme $X$ of a toric variety $Y$, we apply the algorithm from W{\l}odarczyk's proof of his embedding theorem to the ambient toric variety and show that $(\star)$ holds for the system of toric embeddings of $X$ that factor through closed embeddings of $Y$.  The same then holds for any larger system of embeddings that contains finite products, including the inverse system of all toric embeddings of $X$.

Alternative approaches to understanding the topology of analytifications of varieties using limits of polyhedral complexes involve skeletons of semistable or pluristable formal models \cite{Berkovich04, KontsevichSoibelman06}, or spaces of definable types \cite{HrushovskiLoeser10}.  Such methods produce inverse systems with desirable properties, e.g., in which every map of polyhedra is a strong deformation retraction, but require far more technical machinery.

\noindent \textbf{Acknowledgments.}  We thank Melody Chan and the referee for pointing out an error in an earlier version of this paper.  The work of SP is partially supported by NSF DMS-1068689 and NSF CAREER DMS-1149054.

\section{Preliminaries}

Throughout, we work over a field $K$ that is complete with respect to a nonarchimedean valuation, which may be trivial.  Let $N \cong \Z^n$ be a lattice, let $M = \Hom(N,\Z)$ be its dual lattice, and let $T = \Spec K[M]$ be the torus with character lattice $M$.

Affine toric varieties with dense torus $T$ correspond naturally and bijectively with rational polyhedral cones in the real vector space $N_\RR = N \otimes_\Z \RR$.  For such a rational polyhedral cone $\sigma$, we write $S_\sigma$ for the additive monoid of lattice points in $M$ that are nonnegative on $\sigma$.  Then the corresponding affine toric variety is
\[
U_\sigma = \Spec K[S_\sigma].
\]
We write $u$ for a lattice point in $M$ and $\bx^u$ for the corresponding regular function and refer to the scalar multiples $a\bx^u$, for $a \in K^*$, as monomials.

The vector space $N_\RR$ is the usual tropicalization of the dense torus $T$.  Tropicalizations of more general toric varieties were studied implicitly by Thuillier, as skeletons of analytifications  \cite{Thuillier07}, and explicitly by Kajiwara \cite{Kajiwara08}, before being applied to the development of explicit relations between tropical and nonarchimedean analytic geometry in \cite{analytification, BPR11, Rabinoff12}, to which we refer the reader for further details.  The tropicalization of the affine toric variety $U_\sigma$ is the space of monoid homomorphisms
\[
N_\R(\sigma) = \Hom(S_\sigma, \R),
\]
where $\R$ is the additive monoid $\RR \cup \{ + \infty \}$.  This space of monoid homomorphisms is a partial compactification of $N_\RR \cong \Hom(S_\sigma, \RR)$, just as $U_\sigma$ is a partial compactification of $T$.

Recall that the analytification $U^\an$ of an affine scheme over $K$ is the set of ring valuations
\[
\eta: K[U] \rightarrow \R,
\]
that extend the given valuation on $K$, equipped with the subspace topology for the natural inclusion of $U^\an$ in $\R^{K[U]}$.  This is the coarsest topology such that the map to $\R$ induced by each function $f$ in $K[U]$ is continuous.  The tropicalization map for the affine variety $U_\sigma$ is the natural projection
\[
\trop: U_\sigma^\an \rightarrow N_\R(\sigma)
\]
taking a valuation $\eta$ to the monoid homomorphism $[u \mapsto \eta(\bx^u)]$.  This construction generalizes to arbitrary toric varieties and their subvarieties as follows.

Let $\Delta$ be a fan in $N_\RR$, and let $Y_\Delta$ be the corresponding toric variety with dense torus $T$, so $Y_\Delta$ is the union of the affine toric varieties $U_\sigma$ corresponding to cones $\sigma$ in $\Delta$.  Then $Y_\Delta^\an$ is the union of the corresponding subsets $U_\sigma^\an$, which are again open \cite[Proposition~3.4.6]{Berkovich90}, and the spaces $N_\R(\sigma)$ glue together to give $N_\R(\Delta)$ with a natural map
\[
\trop: Y_\Delta^\an \rightarrow N_\R(\Delta)
\]
whose restriction to each $U_\sigma^\an$ is the map $\trop$ described above.  This global tropicalization map is continuous, surjective, and proper in the sense that the preimage of a compact set is compact \cite[Lemma~2.1]{analytification}.  In particular, it is a closed map.  Now, if $X$ is a scheme of finite type over $K$ and $\iota: X \hookrightarrow Y_\Delta$ is a closed embedding, then $\iota^\an: X^\an \hookrightarrow Y_\Delta^\an$ is also a closed embedding \cite[Proposition~3.4.7]{Berkovich90}.  Therefore, the image $\trop(X, \iota)$ of $X^\an$ is closed in $N_\R(\Delta)$.

Tropicalization maps are covariantly functorial; an equivariant morphism of toric varieties $\varphi: Y_\Delta \rightarrow Y_{\Delta'}$ induces a continuous map $\trop(\varphi): N_\R(\Delta) \rightarrow N_\R(\Delta')$ that forms a commutative diagram with the tropicalization maps and the analytification of $\varphi$.  Now, suppose
\[
\iota: X \hookrightarrow Y_\Delta \mbox{ and } \jmath: X \hookrightarrow Y_{\Delta'}
\]
are closed embeddings.  We say that $\varphi$ is a morphism of embeddings if $\varphi \circ \iota = \jmath$.  In this case, $\trop(\varphi)$ maps $\trop(X,\iota)$ onto $\trop(X, \jmath)$.

A system $\cS$ of toric embeddings of a scheme $X$ of finite type over $K$ is a diagram of closed embeddings of $X$ in toric varieties, with arrows given by morphisms of toric embeddings.  By the functoriality properties of tropicalization discussed above, there is a corresponding diagram of tropicalizations, whose objects are the spaces $\trop(X, \iota)$ for $\iota \in \cS$, and whose arrows are the tropicalizations of the arrows in $\cS$.  We say that $\cS$ contains finite products if, for any embeddings $\iota$ and $\iota'$ in $\cS$, the product $\iota \times \iota'$ is in $\cS$, along with the natural projections from $\iota \times \iota'$ to $\iota$ and $\iota'$.

\section{Limits of systems of tropicalizations}

We now prove Theorem~\ref{thm:sufficient}, which says that a system $\cS$ of toric embeddings of $X$ that contains finite products and satisfies $(\star)$ induces a homeomorphism $\pi_\cS$ from $X^\an$ to the limit of the corresponding system of tropicalizations.

\begin{proposition} \label{prop:surj}
Let $\cS$ be a system of toric embeddings of $X$ that contains finite products.  Then $\pi_\cS$ is surjective.
\end{proposition}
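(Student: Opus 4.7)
The plan is to realize surjectivity of $\pi_\cS$ as a compactness/finite intersection argument, using properness of the tropicalization maps and the hypothesis that $\cS$ contains finite products. A point of $\varprojlim_{\iota \in \cS} \trop(X, \iota)$ is a compatible family $\bu = (p_\iota)_{\iota \in \cS}$, where compatibility means $\trop(\varphi)(p_\iota) = p_{\iota'}$ for every morphism $\varphi : \iota \to \iota'$ in $\cS$. For each $\iota \in \cS$, let
\[
F_\iota := \pi_\iota^{-1}(p_\iota) \subseteq X^\an.
\]
Since the tropicalization map $X^\an \to \trop(X, \iota)$ is continuous and surjective onto its image by definition, $F_\iota$ is a nonempty closed subset of $X^\an$. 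Because $\trop : Y_\Delta^\an \to N_\R(\Delta)$ is proper and $X^\an$ is closed in $Y_\Delta^\an$, the preimage of any point, and in particular $F_\iota$, is compact.

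To find a point of $X^\an$ lying over $\bu$, I would fix an arbitrary $\iota_0 \in \cS$ and work inside the compact space $F_{\iota_0}$. I need to show that the family
\[
\{\, F_{\iota_0} \cap F_\iota \,\}_{\iota \in \cS}
\]
of closed subsets of $F_{\iota_0}$ has nonempty total intersection, for which it suffices (by compactness of $F_{\iota_0}$) to check the finite intersection property and nonemptiness. Given $\iota_1, \ldots, \iota_n \in \cS$, the hypothesis that $\cS$ contains finite products yields an embedding $\iota' := \iota_0 \times \iota_1 \times \cdots \times \iota_n \in \cS$ together with the projection morphisms $\iota' \to \iota_j$. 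By compatibility of the family $\bu$, the point $p_{\iota'}$ projects to $p_{\iota_j}$ under $\trop$ of each projection, so the commutative diagrams relating $\trop$ and $\pi$ give
\[
F_{\iota'} \subseteq F_{\iota_0} \cap F_{\iota_1} \cap \cdots \cap F_{\iota_n}.
\]
Since $F_{\iota'}$ is nonempty, so is the finite intersection. Therefore the collection has the finite intersection property, and compactness of $F_{\iota_0}$ yields a point $\eta \in \bigcap_{\iota \in \cS} F_\iota$. By construction $\pi_\iota(\eta) = p_\iota$ for every $\iota \in \cS$, so $\pi_\cS(\eta) = \bu$.

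The only subtle point I expect to dwell on is the nonemptiness of the finite intersection, and this is exactly what the closure under products buys: without it, one could only conclude that pairwise intersections of fibers lying over compatible points are nonempty if a common lower bound existed in $\cS$. Everything else is formal: $F_\iota$ is nonempty by tautology, closed by continuity, and compact by properness of $\trop$, and the rest is the classical finite intersection characterization of compactness applied inside the compact set $F_{\iota_0}$. No step requires the hypothesis $(\star)$; surjectivity of $\pi_\cS$ uses only functoriality and that $\cS$ contains finite products.
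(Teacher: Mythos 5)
Your proof is correct and follows essentially the same route as the paper: fix one fiber, note it is nonempty and compact by properness of the tropicalization map, and use closure under finite products to show the family of fibers has the finite intersection property, with $\trop^{-1}$ of the point in the product embedding contained in the finite intersection. The only difference is that you spell out the compactness and nonemptiness of the fibers in slightly more detail, which the paper leaves implicit.
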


\begin{proof}
Let $x = (x_\iota)_{\iota \in \cS}$ be a point in the inverse limit over $\iota \in \cS$ of $\trop(X, \iota)$.  We must show that the fiber
\[
\pi_\cS^{-1}(x) = \bigcap_{\iota \in \cS} \trop^{-1}(x_\iota)
\]
is nonempty.

Fix some $\jmath$ in $\cS$.  Then $\trop^{-1}(x_\jmath)$ is a nonempty compact subset of $X^\an$ that contains $\pi_\cS^{-1}(x)$.  For any finite collection of embeddings $\iota_1, \ldots \iota_s$ in $\cS$, the intersection
\[
\trop^{-1}(x_{\iota_1}) \cap \cdots \cap \trop^{-1}(x_{\iota_s}) \cap \trop^{-1}(x_\jmath)
\]
is nonempty, since it contains $\trop^{-1}(x_{\iota_1 \times \cdots \times \iota_s \times \jmath})$.  Since each of these finite intersections inside the compact set $\trop^{-1}(x_\jmath)$ is nonempty, it follows that the full intersection $\bigcap_{\iota \in \cS} \trop^{-1}(x_\iota)$ is nonempty, as required.
\end{proof}

\begin{proposition}  \label{prop:inj}
Let $\cS$ be a system of toric embeddings of $X$ that satisfies $(\star)$.  Then $\pi_\cS$ is injective.
\end{proposition}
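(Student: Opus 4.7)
The plan is to reduce injectivity to two clean local questions: first, that the affine open cover $\{U_i\}$ is detected at the tropical level by suitable embeddings, so that two points of $X^\an$ with the same image in the limit lie in a common $U_i^\an$; second, that on each coordinate ring $K[U_i]$, two valuations that agree on all pullbacks of monomials under embeddings in $\cS$ must coincide.

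So suppose $\eta_1, \eta_2 \in X^\an$ satisfy $\pi_\cS(\eta_1) = \pi_\cS(\eta_2)$. Since $\{U_i^\an\}$ covers $X^\an$, some $U_i$ contains $\eta_1$. Applying $(\star)$ to the nonzero constant function $1 \in K[U_i]$ produces an embedding $\iota \in \cS$ such that $U_i = \iota^{-1}(V)$ for a torus invariant open subscheme $V \subseteq Y_\Delta$. Writing $\Delta_V \subseteq \Delta$ for the subfan corresponding to $V$, we have $V^\an = \trop^{-1}(N_\R(\Delta_V))$ inside $Y_\Delta^\an$, so membership in $U_i^\an$ is entirely controlled by the image under $\trop \circ \iota^\an$. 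From $\trop(\iota(\eta_1)) = \trop(\iota(\eta_2)) \in N_\R(\Delta_V)$ it follows that $\eta_2 \in U_i^\an$ as well.

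Now both $\eta_1$ and $\eta_2$ are ring valuations on $K[U_i]$ extending the valuation on $K$, and it suffices to prove that $\eta_1(f) = \eta_2(f)$ for every nonzero $f \in K[U_i]$. For such an $f$, apply $(\star)$ a second time to obtain some $\iota \in \cS$ with $U_i = \iota^{-1}(V)$ and $f = \iota^*(a\bx^u)$ for a monomial $a\bx^u$ with $a \in K^*$ and $u \in M$. Then
\[
\eta_j(f) \;=\; \val(a) + \eta_j\bigl(\iota^*\bx^u\bigr) \;=\; \val(a) + \trop(\iota(\eta_j))(u)
\]
for $j = 1, 2$, and the two right-hand sides agree by the hypothesis that $\trop(\iota(\eta_1)) = \trop(\iota(\eta_2))$.

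There is no real obstacle here; the entire argument is driven by two applications of $(\star)$ together with the standard identification of preimages of toric open subvarieties with preimages of subfans under $\trop$. Note that the finite-products assumption on $\cS$ plays no role in this proposition, as expected — it was needed only for the surjectivity of $\pi_\cS$ in Proposition~\ref{prop:surj}.
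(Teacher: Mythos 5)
Your proof is correct and takes essentially the same route as the paper's: the paper argues contrapositively (two distinct points are separated either by an embedding realizing $U_i$ as the preimage of an invariant open, or by one realizing a function $f$ with $\eta(f)\neq\eta'(f)$ as the pullback of a monomial), which is exactly your two-step reduction read in reverse. Your closing observation that the finite-products hypothesis is not needed here also matches the paper, which assumes only $(\star)$ in this proposition.
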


\begin{proof}
Suppose $\cS$ satisfies $(\star)$ and let $\eta$ and $\eta'$ be distinct points in $X^\an$.   We must show that there is an embedding $\iota$ in $\cS$ such that the images of $\eta$ and $\eta'$ are distinct in $\trop(X, \iota)$.  Fix an affine open cover $U_1, \ldots, U_r$ of $X$ such that, for $1 \leq i \leq r$ and $f \in K[U_i]$, there is a toric embedding of $X$ such that $U_i$ is the preimage of a torus invariant open set and $f$ is the pullback of a monomial.  

Choose $i$ such that $\eta$ is in $U_i^\an$, and suppose $\eta'$ is not in $U_i^\an$.  Let $\iota$ be a toric embedding such that $U_i$ is the preimage of a torus invariant open subset $U_{\sigma_1} \cup \cdots \cup U_{\sigma_s}$. Then $\pi_\iota(\eta)$ is in the open subset $N_\R(\sigma_1) \cup \cdots \cup N_\R(\sigma_s)$ of $N_\R(\Delta)$, but $\pi_\iota(\eta')$ is not.

Otherwise, $\eta$ and $\eta'$ are distinct points in $U_i^\an$, corresponding to distinct ring valuations on $K[U_i]$.  Let $f$ be a regular function in $K[U_i]$ such that $\eta(f)$ is not equal to $\eta'(f)$.  Choose a toric embedding $\jmath$ of $X$ such that $U_i$ is the preimage of an invariant open set and $f$ is the pullback of a monomial.  Say $U_\sigma$ is an invariant affine open whose analytification contains the image of $\eta$.  If $U_\sigma^\an$ does not contain the image of $\eta'$ then $\pi_\iota(\eta)$ is in $N_\R(\sigma)$ but $\pi_\iota(\eta')$ is not.

It remains to consider the case where $U_\sigma^\an$ contains both $\iota(\eta)$ and $\iota(\eta')$.  Then there is a nonzero scalar $a \in K^*$ such that $af$ is the pullback of a character $\bx^u$, and $\pi_\jmath(\eta)$ and $\pi_\jmath(\eta')$ are monoid homomorphisms from $S_\sigma$ to $\R$ that take $u$ to $\eta(f) + \val(a)$ and $\eta'(f) + \val(a)$, respectively.  In particular, $\pi_\jmath(\eta)$ and $\pi_\jmath(\eta')$ are distinct, as required.
\end{proof}

Finally, we show that existence of finite products and property $(\star)$ are together enough to guarantee that $\pi_\cS$ is a homeomorphism.

\begin{proof}[Proof of Theorem~\ref{thm:sufficient}]
Let $\cS$ be a system of toric embeddings of $X$ that contains finite products and satisfies $(\star)$.  By Propositions~\ref{prop:surj} and \ref{prop:inj} the induced map
\[
\pi_\cS: X^\an \rightarrow \varprojlim_{\iota \in \cS} \trop(X, \iota)
\]
is a continuous bijection.

Choose an open cover $U_1, \ldots, U_r$ verifying property $(\star)$, let $f$ be a regular function in $K[U_i]$, and let $\iota$ be a toric embedding such that $U_i$ is the preimage of an invariant open subset $U$ and $f$ is the pullback of a monomial.  Say $U$ is the union of invariant affine opens $U_{\sigma_1} \cup \cdots \cup U_{\sigma_s}$, and let $\cU$ be the preimage of $N_\R(\sigma_1) \cup \cdots \cup N_\R(\sigma_s)$ in $\varprojlim \trop(X, \iota)$.  So $\cU$ is exactly the image of $U_i^\an$ under $\pi_\cS$.  Now, suppose $f$ is the pullback of the monomial $a\bx^u$.  The topology on $N_\R(\sigma_1) \cup \cdots \cup N_\R(\sigma_s)$ is the coarsest such that the map
\[
\pi_f : N_\R(\sigma_1) \cup \cdots \cup N_\R(\sigma_s) \rightarrow \R
\]
taking a monoid homomorphism $\phi: S_{\sigma_i} \rightarrow \R$ to $\phi(u) + \val(a)$ is continuous, for all such $f$.  Therefore, the composite map $\cU \rightarrow \R$ is continuous  as well.  The map $U_i^\an \rightarrow \R$ taking a valuation $\eta$ to $\eta(f)$ is also continuous, and factors through $\pi_f$.  Moreover, the topology on $U_i^\an$ is the coarsest such that all such maps to $\R$ are continuous.  It follows that $\pi_\cS$ restricts to a homeomorphism from $U_i^\an$ to $\cU$.  Since $\pi_\cS$ is bijective and the $U_i^\an$ form an open cover of $X^\an$, it follows that $\pi_\cS$ is a homeomorphism, as required.
\end{proof}

\section[Proof of Theorem~1.2]{Proof of Theorem~\ref{thm:subscheme}} \label{sec:embeddings}

For reducible and non reduced schemes, it is natural to look for a condition such as $(\star)$ phrased in terms of affine covers and regular functions.  However, for irreducible varieties it is often easier and more convenient to avoid dealing with open covers and domains of regularity by working directly with rational functions.  We will prove Theorem~\ref{thm:subscheme} using the following observation: when a scheme $X$ is embedded in an irreducible variety $Y$, one can use rational functions on $Y$ to produce embeddings of $X$ that factor through embeddings of $Y$.

\begin{proposition} \label{prop:rationalFunctions}
Let $Y$ be a variety over $K$ and let $\cS$ be a system of toric embeddings of $Y$ that contains finite products.  Suppose that for each rational function $f$ in $K(Y)^*$ there is an embedding $\iota$ in $\cS$ such that $f$ is the pullback of a monomial.  Then the restriction of $\cS$ to $X$ satisfies $(\star)$.  \end{proposition}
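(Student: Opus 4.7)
The plan is to use a base embedding from $\cS$ to produce an affine open cover of $X$ by pulling back invariant affines, and then to handle each regular function on the cover by combining the base embedding with a monomializing embedding supplied by the hypothesis, using the closure of $\cS$ under finite products.

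First, I would fix any $\iota_0 \colon Y \hookrightarrow Y_{\Delta_0}$ in $\cS$ and, using that $Y$ is of finite type and hence quasi-compact, select finitely many cones $\sigma_1, \ldots, \sigma_r \in \Delta_0$ whose invariant affines $U_{\sigma_i}$ have preimages $V_i := \iota_0^{-1}(U_{\sigma_i})$ covering $Y$.  Each $V_i$ is affine as a closed subscheme of the affine $U_{\sigma_i}$, and $U_i := V_i \cap X$ gives an affine open cover of $X$.  Given a nonzero $f \in K[U_i]$, I use the surjection $K[V_i] \twoheadrightarrow K[U_i]$ from the closed embedding $X \hookrightarrow Y$ to lift $f$ to $\tilde f \in K[V_i]$; since $Y$ is irreducible, $\tilde f$ is a nonzero element of $K(Y)^*$.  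Applying the hypothesis gives $\iota_f \colon Y \hookrightarrow Y_{\Delta_f}$ in $\cS$ and a monomial $a\bx^u$ with $\tilde f = \iota_f^*(a\bx^u)$ in $K(Y)^*$; closure of $\cS$ under products then puts $\iota_0 \times \iota_f$ in $\cS$, and composing with $X \hookrightarrow Y$ yields an embedding $\iota$ in the restriction of $\cS$ to $X$.

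To verify $(\star)$ for this $\iota$, I would take the invariant open $W := U_{\sigma_i} \times W_u$ of the product toric variety, where $W_u \subseteq Y_{\Delta_f}$ denotes the maximal invariant open on which $a\bx^u$ is regular, together with the product monomial $a\bx^{(0,u)}$; this monomial is regular on $Y_{\Delta_0} \times W_u \supseteq W$ by the usual toric criterion on product fans, and its pullback along $\iota_0 \times \iota_f$ agrees with $\tilde f$ as a rational function on $Y$, whose restriction to $U_i$ equals $f$.

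The crux is the preimage identity $\iota^{-1}(W) = U_i$, which reduces to the inclusion $\iota_f(U_i) \subseteq W_u$.  This is where I expect the main difficulty, because the equality $\tilde f = \iota_f^*(a\bx^u)$ is only an identity of rational functions and in principle allows the formal pullback to exhibit removable singularities along $V_i$---the monomial $a\bx^u$ may be formally singular at some $\iota_f(y)$ with $y \in V_i$ even though the pullback extends regularly.  I would resolve this by exploiting the hypothesis beyond its immediate application: by monomializing auxiliary rational functions, such as $\iota_0^*(\bx^w)$ for $w \in S_{\sigma_i}$ together with suitable products and quotients involving $\tilde f$, and iterating the product construction inside $\cS$, one can shift the character $(0,u)$ by an element of the kernel of the combined character-pullback map on the enlarged product until the shifted character is regular on an invariant open of the enlarged product toric variety whose preimage in $X$ is exactly $U_i$.
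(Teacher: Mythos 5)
Your setup is fine through the reduction of the preimage identity to the inclusion you isolate, and you have correctly located the crux: the hypothesis only gives $\tilde f=\iota_f^*(a\bx^u)$ as an identity of rational functions, so nothing forces the monomial to be regular, let alone invertible, at the points of $\iota_f(V_i)$. But your last paragraph does not close this gap; it is a plan, not an argument. The hypothesis supplies, for each rational function, \emph{some} embedding realizing it as the pullback of a monomial, with no control over where $V_i$ lands relative to that monomial's regularity locus. It provides no mechanism for ``shifting the character by an element of the kernel of the character-pullback map'': you do not specify which auxiliary functions to monomialize, why the resulting characters can be combined into a shift with the desired regularity, or why the iteration terminates. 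As written, the proof is incomplete at exactly the step you flag as the main difficulty.

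For comparison, the paper does not use a base embedding to produce the cover. It takes an arbitrary affine cover $V_1,\dots,V_r$ of $Y$, sets $U_i=X\cap V_i$, lifts $f$ to $g\in K[V_i]$, and then --- this is the key device you are missing --- for each $j\neq i$ it monomializes a finite set of generators $g_{j1},\dots,g_{js}$ of the ideal of $Y\smallsetminus V_i$ in $K[V_j]$, and passes to the product of all of these embeddings together with the one monomializing $g$. Since the common vanishing locus of monomials on a toric variety is a torus-invariant closed set, the image of $Y\smallsetminus V_i$ under the product embedding is cut out by such loci, so $V_i$ becomes the preimage of an invariant open $U$ on which the monomial lifting $f$ is regular. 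In other words, the control over both the invariant open set and the regularity of the monomial is obtained by monomializing the ideal of the complement of $V_i$, not by post hoc manipulation of the character $u$; this is the ingredient you would need to import to complete your argument.
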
 

\noindent In particular, when the hypotheses of the proposition are satisfied then the restriction of $\pi_\cS$ to $X^\an$ is a homeomorphism onto $\displaystyle{\varprojlim_{\iota \in \cS}} \Trop(X, \iota)$.

\begin{proof}
We show that the restriction of $\cS$ to $X$ satisfies $(\star)$.  Let $V_1, \ldots, V_r$ be an affine open cover of $Y$, and let
\[
U_i = X \cap V_i,
\]
so $U_1, \ldots, U_r$ is an affine open cover of $X$.  Since $X$ is closed in $Y$, the restriction map on coordinate rings $K[V_i] \rightarrow K[U_i]$ is surjective.  Let $f \in K[U_i]$ be a regular function, and choose $g \in K[V_i]$ that restricts to $f$.  We must show that there is an embedding $\iota \in \cS$ such that $U_i$ is the preimage in $X$ of an invariant open subset $U$ and $f$ is the pullback of a monomial.

For $j \neq i$, choose functions
\[
g_{j1}, \ldots, g_{js}
\]
in $K[V_j]$ that generate the ideal of the closed subvariety $Y \smallsetminus V_i$ on $V_j$.  By hypothesis, there exists an embedding $\iota$ in $\cS$ such that $g$ is the pullback of a monomial, and embeddings $\iota_{jk}$ such that $g_{jk}$ is the pullback of a monomial.  We consider the product embedding
\[
\jmath = \iota \times \Big( \prod_{j,k} \iota_{jk} \Big),
\]
which is also in $\cS$.

By construction, the image under $\jmath$ of the closed subvariety $Y \smallsetminus V_i$ is cut out by vanishing loci of monomials, and hence is the preimage of a torus invariant closed set.  Therefore $V_i$ is the preimage in $Y$ of an invariant open subset $U$, and $g$ is the pullback of a monomial that is regular on $U$. Restricting $\jmath$ to $X$, we see that $U_i$ is the preimage of $U$ and $f$ is the pullback of a monomial.  So, the restriction of $\cS$ to $X$ satisfies $(\star)$ and the theorem follows.
\end{proof}

To prove Theorem~\ref{thm:subscheme}, we will apply Proposition~\ref{prop:rationalFunctions} to a system of embeddings of $Y$ in toric varieties that is generated using W\l odarczyk's algorithm.  We begin by briefly recalling the outline of this procedure, from \cite{Wlodarczyk93}.

\bigskip

Let $Y$ be a normal variety over $K$, and suppose $\iota: Y \hookrightarrow Y_\Delta$ is a toric embedding whose image meets the dense torus $T \subset Y_\Delta$.  Then the pullbacks of the characters on the dense torus form a finitely generated group $\M$ of rational functions on $Y$, and the set of fibers over the generic points of the $T$-invariant divisors of $Y_\Delta$ is a finite collection $\Div(\M)$ of Weil divisors in $Y$ with no common components.  The pair $(\M, \Div(\M))$ satsifies four axioms that encode familiar properties satisfied by the monomial functions and intersections with coordinate hyperplanes in any nondegenerate embedding of $Y$ into projective space \cite[\S 3.1]{Wlodarczyk93}.  The first axiom says that the divisor of any function in $\M$ is a $\Z$-linear combination of divisors in $\Div(\M)$.  The second and third axioms are a separation and a saturation axiom, respectively.  The final axiom says that, for any point $p \in Y$, the subsemigroup $S_p$ of rational functions in $\M$ that are regular at $p$ is finitely generated, that the open set $U_p$ where all functions in $S_p$ are regular is affine, and that $S_p$ generates the coordinate ring $K[U_p]$.

A pair $(\M, \Div(\M))$ satisfying these four axioms is called an \emph{embedding system of functions}.  The axioms ensure that the dual cones $\sigma_p \subset \Hom(\M,\mathbb{R})$, consisting of linear functions that are nonnegative on the semigroups $S_p$ for $p \in Y$, form a fan $\Delta$, and that $\M$ is the pullback of the group of characters under a toric embedding $\iota: Y \rightarrow Y_\Delta$ whose image meets the dense torus.

The heart of W{\l}odarczyk's proof is the construction of an embedding system of functions $(\M,\Div(\M))$, which is accomplished through a ten-step procedure.  Some steps of this procedure are algorithmic, but others involve choices.  In particular, in the first step, one chooses a finite, Zariski open cover $\mathcal{U}$ of $Y$ such that any two points of $Y$ lie in some $U$ in $\mathcal{U}$, and in the second step one chooses a finite generating set for each of the coordinate rings $K[U]$. The flexibility coming from these choices yields the following embedding theorem.  This statement is stronger than what is needed to prove Theorem~\ref{thm:subscheme}, but will be useful for intended future applications.

\begin{theorem}  \label{thm:strongEmbedding}
Let $Y$ be a normal variety in which any two points are contained in an affine open subvariety.  Let $U_1, \ldots, U_r$ be affine open subvarieties of $Y$, and let $R_j$ be a finite collection of regular functions on $U_j$, for $1 \leq j \leq r$.  Then there exists a closed embedding $\iota: Y \hookrightarrow Y_{\Delta}$ such that $U_j$ is the preimage of an invariant affine open subset $U_{\sigma_j}$ and each element of $R_j$ is the pullback of a character that is regular on $U_{\sigma_j}$, for $1 \leq j \leq r$.
\end{theorem}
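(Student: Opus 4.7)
The plan is to deduce Theorem~\ref{thm:strongEmbedding} from a careful execution of W\l odarczyk's ten-step algorithm, making controlled choices at the first two steps so that the given data $(U_j, R_j)$ are built into the embedding system of functions $(\M, \Div(\M))$ that the algorithm produces.

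For Step~1 I would enlarge $\{U_1, \ldots, U_r\}$ to a finite affine open cover $\cU$ of $Y$ with the property that any two points of $Y$ lie in a common member of $\cU$. Such an enlargement exists because the two-point hypothesis on $Y$ supplies, for every pair of points, some affine open containing both, and a finite subcover can be extracted by noetherianity. At Step~2, for each $U_j$ I would choose a finite set of generators of the $K$-algebra $K[U_j]$ that contains $R_j$ (any finite set of regular functions extends to a finite generating set because $K[U_j]$ is of finite type), and for the additional members of $\cU$ I would pick arbitrary finite generating sets. Then I would run Steps~3--10 without modification, arriving at an embedding system $(\M, \Div(\M))$ and a closed toric embedding $\iota : Y \hookrightarrow Y_\Delta$.

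Since every chosen generator lies in $\M$, each element of $R_j$ is automatically the pullback of a character $\bx^u$ under $\iota$. The main remaining task is to show that $U_j$ is exactly the preimage in $Y$ of an invariant affine open $U_{\sigma_j}$, and that each such character $\bx^u$ is regular on all of $U_{\sigma_j}$. For the first assertion I would apply the fourth axiom of an embedding system: for a suitable point $p \in U_j$, the open set $U_p$ on which every function in $S_p = \{f \in \M : f \text{ is regular at } p\}$ is regular coincides with $U_j$, and this $U_p$ is the preimage of an invariant affine open by construction. For the second assertion, once $U_j = \iota^{-1}(U_{\sigma_j})$ has been established, the definition of the cones $\sigma_p$ identifies regularity of $\iota^* \bx^u$ at a point $p \in U_j$ with $u$ being nonnegative on $\sigma_p$; since $\sigma_j$ is the union of the faces $\sigma_p$ as $p$ ranges over $U_j$, the normality of $Y$ together with regularity of $\iota^* \bx^u$ on all of $U_j$ forces $u \in S_{\sigma_j}$, so $\bx^u$ is regular on $U_{\sigma_j}$.

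The principal obstacle will be the first assertion, namely verifying through W\l odarczyk's algorithm that each $U_j \in \cU$ is actually recovered as a preimage $\iota^{-1}(U_{\sigma_j})$ of a single invariant affine open, rather than being split into strictly smaller pieces by the refinements of Steps~3--6. I would approach this by tracking the chosen generators of $K[U_j]$ through those steps and showing that their common vanishing locus in the image is exactly the complement $Y \smallsetminus U_j$; this would characterize $U_j$ as the common domain of regularity of a finite subset of $\M$, so that $U_j$ appears as a single invariant chart in the final embedding.
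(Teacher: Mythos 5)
Your proposal follows essentially the same route as the paper: seed Step~1 of W\l odarczyk's algorithm with the $U_j$ (completed to a cover satisfying the two-point condition) and Step~2 with generating sets containing the $R_j$, then run Steps~3--10 and read off the embedding from the resulting embedding system of functions. The "principal obstacle" you flag --- that each $U_j$ survives as the preimage of a single invariant affine chart $U_{\sigma_j}$ --- is exactly the point the paper asserts as an output of the construction (each $U_j$ appears among the affine opens $U_{\mathfrak{p}}$ built from $(\M,\Div(\M))$), and your sketch of how to verify it via the fourth axiom is consistent with how the paper's argument is meant to work.
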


\begin{proof}
We apply the ten-step procedure to our variety $Y$. Using W{\l}odarczyk's notation, our initial indexing set $A$ will be the singleton $A=\{a\}$, with $Y_a = Y$.  In Step 1 of his algorithm, include $U_1, \ldots, U_r$ among the finite set of affine opens $\{U_{ab}\}_{b \in B}$ such that every pair of points in $Y$ lies in some $U_{ab}$.  In Step 2, when choosing a finite set of generators $\{f_{abc}\}_{c \in C}$ for each coordinate ring $K[U_{ab}]$, make sure that the generating set for $K[U_j]$ includes $R_j$, for each $1 \leq j \leq r$.  Then upon completing the remaining Steps 3-10 of W{\l}odarczyk's algorithm, one obtains an embedding system of functions $(\M,\Div(\M))$ with the property that each $f$ in $R_j$ appears in $\M$, for each $1 \leq j \leq r$, and each of the affine open subvarieties $U_1, \ldots, U_r$ of $Y$ appears as one of the affine opens $U_{\mathfrak{p}}\subseteq Y$ constructed from $(\M,\Div(\M))$. The embedding $\iota: Y \hookrightarrow Y_{\Delta}$ built from $(\M,\Div(\M))$ is then a closed toric embedding that verifies the theorem.
\end{proof}

We now apply Proposition~\ref{prop:rationalFunctions} to prove Theorem~\ref{thm:subscheme}.

\begin{proof}[Proof of Theorem~\ref{thm:subscheme}]
Let $X$ be a closed subscheme of a toric variety $Y$. To prove Theorem~\ref{thm:subscheme}, it is enough to show that the system of embeddings of $X$ that factor through embeddings of $Y$ satisfies $(\star)$.  By Proposition~\ref{prop:rationalFunctions}, it is enough to show that for each rational function $f$ in $K(Y)^*$ there is a toric embedding of $Y$ such that $f$ is the pullback of a monomial.  Now, let $U$ be an affine open subset of $Y$ on which $f$ is regular.  By Theorem~\ref{thm:strongEmbedding} there is a toric embedding such that $U$ is the preimage of a torus invariant affine open subset  and $f$ is the pullback of a monomial, as required.
\end{proof}

\begin{remark}
The proof of Theorem~\ref{thm:subscheme} can be adapted to give stronger information in various contexts.  For instance, the targets of the toric embeddings of $Y$ produced by W\l odarczyk's construction are smooth or $\Q$-factorial if and only if $Y$ is so.  It follows that, if $X$ is a scheme that admits a closed embedding into a single toric variety that is smooth or $\Q$-factorial, then its analytification is naturally homeomorphic to the limit of the tropicalizations of all of its embeddings into toric varieties with the same property.  Also, from W{\l}odarczyk's Embedding Theorem, we deduce that if $X$ is a normal variety over an algebraically closed field in which any two points are contained in an affine open subvariety, then the analytification of $X$ is naturally homeomorphic to the limit of the tropicalizations of its toric embeddings.
\end{remark}

\bibliography{math}
\bibliographystyle{amsalpha}

\end{document}